%% LyX 1.2 created this file.  For more info, see http://www.lyx.org/.
%% Do not edit unless you really know what you are doing.
%%\documentclass[a4paper,twocolumn,english]{IEEEtran}
\documentclass[11pt,draftcls,onecolumn]{IEEEtran}

\usepackage[latin1]{inputenc}
\usepackage{amsmath,amssymb}
\usepackage{graphicx}
\usepackage{amsthm}
\usepackage{cite}
\usepackage{array}

 \newtheoremstyle{IEEE}% name of the style to be used
 {0pt}% measure of space to leave above the theorem. E.g.: 3pt
 {0pt}%{\topsep}% measure of space to leave below the theorem.
 {\normalfont}% name of font to use in the body of the theorem
 {12pt}% measure of space to indent
 {\normalfont\itshape}% name of head font
 {:}% punctuation between head and body
 { }% space after theorem head; " " = normal interword space
 {\thmname{#1\,}\thmnumber{#2}\thmnote{#3}}% Manually specify head
 \theoremstyle{IEEE}

\newtheorem{theorem}{Theorem}
\newtheorem{lemma}{Lemma}
\newtheorem{remark}{Remark}

\newtheorem{corollary}{Corollary}
\newtheorem{example}{Example}

\allowdisplaybreaks
\topmargin=-1cm
\parskip0pt
\date{}

\begin{document}

\title{New summation inequalities and their applications to discrete-time delay systems}

\author{Le Van Hien and Hieu Trinh\thanks{{\em}}

\thanks{L.V. Hien is with the School of Engineering,
Deakin University, VIC 3217, Australia, and the Department of Mathematics,
Hanoi National University of Education, Hanoi, Vietnam (e-mail: hienlv@hnue.edu.vn).}
\thanks{H. Trinh is with the School of Engineering,
Deakin University, VIC 3217, Australia (e-mail: hieu.trinh@deakin.edu.au).}}

\maketitle
\pagestyle{plain}

\begin{abstract}
This paper provides new summation inequalities in both single and double forms
to be used in stability analysis of discrete-time systems with time-varying delays.
The potential capability of the newly derived inequalities is
demonstrated by establishing less conservative
stability conditions for a class of linear discrete-time systems
with an interval time-varying delay in the framework of linear matrix inequalities.
The effectiveness and least conservativeness of the derived stability conditions
are shown by academic and practical examples.
\end{abstract}

\begin{keywords}
Summation estimates, discrete-time systems, time-varying delay, linear  matrix inequalities.
\end{keywords}

\section{Introduction}\label{sec:1}
It is well known that time-delay frequently occurs in practical systems
and usually is a source of bad performance, oscillations or instability
\cite{SNAMG,F1}. Therefore, the problem of stability analysis
and applications to control of time-delay systems are essential and
of great importance for both theoretical and practical reasons
which have attracted considerable attention, see, for example
\cite{FSL,HP09,F2,Seu12,SG1,HH14,LGZX,PLL} and the references therein.

Among existing works which concern with stability
of linear time-delay systems, the Lyapunov-Krasovskii functional (LKF)
method plays an essential role in deriving efficient stability conditions. Based on a priori construction
of a Lyapunov-Krasovskii functional combining with some bounding techniques
\cite{HP09,SG1,PLL,PKJ,FLY}, improved delay-dependent stability conditions
for continuous/discrete time-delay systems were derived in terms of tractable
linear matrix inequalities \cite{SG1,PLL,FLY,Peng}.
However, the design of such Lyapunov-Krasovskii functional
and especially the techniques used in bounding the derivative or difference
of constructed Lyapunov-Krasovskii functional usually produce an undesirable
conservatism in stability conditions. Therefore, aiming at reducing conservativeness
of stability conditions, it is relevant and important to improve some fundamental inequalities
to be used in establishing such stability criteria \cite{PLL,SG2}.

Note that, most of the aforementioned works have been devoted to continuous-time systems.
Besides, the problem of stability analysis and control of
discrete-time systems with time-varying delay is very relevant and
therefore it should be receiving a greater focus due to the following practical reasons.
Firstly, with the rapid development of computer-based computational techniques,
discrete-time systems are more suitable for computer simulation, experiment and computation.
Secondly, many practical systems are in the form of 
nonlinear and/or non-autonomous continuous-time systems 
with time-varying delays. A discretization from continuous-time systems
leads to discrete-time systems described by difference equations which inherit the 
similar dynamical behavior of the continuous ones\cite{H14}. 
In addition, the investigation of stability and control
of discrete-time systems requires specific and quite different tools from the continuous ones.
Thus, stability analysis and control of discrete-time delay systems have received considerable
attention in recent years \cite{F3,HWLX,ZXZ,MLDG,SH,Kao,Kwon1,HAT,Kim}.
Most recently, novel summation inequalities were derived \cite{NPH,SGF1}
by extending the Wirtinger-based integral inequality \cite{SG1}.
These summation inequalities provide a powerful tool to derive less conservative
stability conditions for discrete-time systems with interval time-varying delay
in the framework of tractable linear matrix inequalities.

In this paper, new summation inequalities which provide an efficient tool
for stability analysis of discrete-time systems with time-varying delay are fisrt derived.
Inspired by the approaches proposed in \cite{PLL,SG2} for
the continuous-time systems, new summation inequalities in both single and double
forms are derived by refining the discrete Jensen inequalities.
It is worth noting that the obtained results in this paper theoretically encompass
the summation inequalities proposed in \cite{SGF1,NPH}. Furthermore,
unlike \cite{SGF1,NPH}, we prove that the proposed inequalities
do not depend on the choice of first-order approximation sequences.
By employing these new inequalities, a suitable Lyapunov-Krasovskii functional
is constructed and less conservative stability conditions are derived
for a class of discrete-time systems with interval time-varying delay.
To illustrate the effectiveness of the proposed stability conditions,
an academic example and a practical satellite control system
are provided. These examples show that our stability conditions provide
significant improvement over existing works in the literature.

The  paper is organized as follows. Section II
presents some preliminary results. New summation inequalities and their applications to stability analysis of
a class of discrete-time systems with interval time-varying delay
are presented in Section III and Section IV, respectively.
Numerical examples to demonstrate the effectiveness of the obtained results
are also given in Section IV.

\section{Preliminaries}

\textbf{Notations:} Throughout this paper, we denote $\Bbb{Z}$ and $\Bbb{Z}^+$ the set of integers
and positive integers, respectively, $\Bbb{R}^n$ the $n$-dimensional Euclidean space with vector norm
$\|.\|$, $\Bbb{R}^{n\times m}$ the set of $n\times m$ real matrices. For matrices
$A,B\in\Bbb{R}^{n\times m}$, $\mathrm{col}\{A,B\}$ and $\mathrm{diag}\{A,B\}$
denote the block matrix $\begin{bmatrix}A\\ B\end{bmatrix}$ and
$\begin{bmatrix}A&0\\0&B\end{bmatrix}$, respectively. A matrix $P\in\Bbb{R}^{n\times n}$
is positive (negative) definite, write $P>0$ ($P<0$) if $x^TPx>0$ ($x^TPx<0$) for all $x\in\Bbb{R}^n$, $x\neq 0$.
We let $\Bbb{S}^+_n$ denote the set of symmetric positive definite matrices.
For any $A\in\Bbb{R}^{n\times n}$, $\mathtt{He}(A)$ stands for $A+A^T$.
For $a,b\in\Bbb{Z}$, $a\leq b$, $\Bbb{Z}[a,b]$ denotes the set of integers between $a$ and $b$.
For a sequence $u:\Bbb{Z}[a,b]\to\Bbb{R}^n$,
we write $u_k=u(k)$, $k\in\Bbb{Z}[a,b]$, $\Delta$ denotes the forward difference operator,
that means $\Delta u_k=u_{k+1}-u_k$. For any two sequences $u,v:\Bbb{Z}[a,b]\to\Bbb{R}^n$,
it is obvious that $u_k\Delta v_k=\Delta(u_kv_k)-v_{k+1}\Delta u_k$.

The following inequalities which are widely used in the literature
can be easily derived by using Schur complement lemma.

\begin{lemma}(Jensen's inequalities)\label{lm2.1}
For a given matrix $R\in\Bbb{S}^+_n$, integers $b>a$, any sequence
$u: \Bbb{Z}[a,b]\to\Bbb{R}^n$, the following inequalities hold
\begin{align}
&\sum_{k=a}^bu^T_kRu_k\geq\frac{1}{\ell}\big(\sum_{k=a}^bu_k\big)^T
R\big(\sum_{k=a}^bu_k\big),\label{e2.1}\\
&\sum_{k=a}^b\sum_{s=a}^ku_s^TRu_s\geq\frac{2}{\ell(\ell+1)}\big(\sum_{k=a}^b\sum_{s=a}^ku_s\big)^T
R\big(\sum_{k=a}^b\sum_{s=a}^ku_s\big),\label{e2.2}
\end{align}
where $\ell=b-a+1$ denotes the length of interval $[a,b]$ in $\Bbb{Z}$.
\end{lemma}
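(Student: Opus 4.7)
My plan is to derive both inequalities uniformly from a single Schur-complement trick, then specialize to single and double sums. The starting observation is that for any $v\in\Bbb{R}^n$ and $R\in\Bbb{S}^+_n$, the block matrix
\begin{equation*}
M(v)=\begin{bmatrix} v^TRv & v^T \\ v & R^{-1} \end{bmatrix}
\end{equation*}
is positive semidefinite, since its Schur complement with respect to the invertible block $R^{-1}$ equals $v^TRv-v^TR(R^{-1})^{-1}\!{}^{-1}v = 0$. (More directly, $M(v)=\mathrm{col}\{Rv,v\}(Rv,v)^{T}$ conjugated by $\mathrm{diag}\{R^{-1},I\}$ on the left-top side, i.e.\ a rank-one PSD matrix.) The whole proof then amounts to summing $M(u_k)$ or $M(u_s)$ suitably and reading off the Schur complement of the resulting inequality.

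For the single-sum bound \eqref{e2.1}, I will sum $M(u_k)$ over $k\in\Bbb{Z}[a,b]$ to obtain
\begin{equation*}
\sum_{k=a}^{b} M(u_k) = \begin{bmatrix} \sum_{k=a}^{b}u_k^T R u_k & \sum_{k=a}^{b}u_k^T \\ \sum_{k=a}^{b}u_k & \ell R^{-1} \end{bmatrix}\succeq 0,
\end{equation*}
since a sum of PSD matrices is PSD and $\sum_{k=a}^{b}R^{-1}=\ell R^{-1}$. Applying Schur complement to the (positive definite) $(2,2)$ block $\ell R^{-1}$ immediately yields
\begin{equation*}
\sum_{k=a}^{b}u_k^T R u_k - \tfrac{1}{\ell}\Bigl(\sum_{k=a}^{b}u_k\Bigr)^T R \Bigl(\sum_{k=a}^{b}u_k\Bigr) \geq 0,
\end{equation*}
which is exactly \eqref{e2.1}.

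For the double-sum bound \eqref{e2.2}, the same plan applies with the double-index summation $\sum_{k=a}^{b}\sum_{s=a}^{k}M(u_s)$. The only point that needs care is counting the $(2,2)$ entries: the constant matrix $R^{-1}$ appears once for every pair $(k,s)$ with $a\leq s\leq k\leq b$, and the number of such pairs is $\sum_{k=a}^{b}(k-a+1)=\tfrac{\ell(\ell+1)}{2}$. Consequently,
\begin{equation*}
\sum_{k=a}^{b}\sum_{s=a}^{k} M(u_s) = \begin{bmatrix} \sum_{k=a}^{b}\sum_{s=a}^{k}u_s^T R u_s & \sum_{k=a}^{b}\sum_{s=a}^{k}u_s^T \\ \sum_{k=a}^{b}\sum_{s=a}^{k}u_s & \tfrac{\ell(\ell+1)}{2} R^{-1} \end{bmatrix}\succeq 0,
\end{equation*}
and one more application of Schur complement delivers \eqref{e2.2}.

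There is essentially no obstacle beyond the arithmetic above: the only place a mistake is easy to make is in counting the multiplicity of $R^{-1}$ in the double sum, so I would double-check that $\sum_{k=a}^{b}(k-a+1)=\ell(\ell+1)/2$ before presenting the Schur step. Everything else is automatic from positivity of $R$ and linearity of the sum, and this approach has the advantage that both inequalities follow from the same one-line matrix identity $M(v)\succeq 0$.
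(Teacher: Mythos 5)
Your proof is correct and follows exactly the route the paper indicates: the paper gives no detailed argument, stating only that both inequalities ``can be easily derived by using Schur complement lemma,'' and your construction of the rank-deficient PSD block matrix $M(v)$, summation, and Schur complement of the $(2,2)$ block is the standard way to carry that out. The multiplicity count $\sum_{k=a}^{b}(k-a+1)=\ell(\ell+1)/2$ for the double sum is also right, so nothing is missing.
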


\section{New summation inequalities}

In this section, new summation inequalities are derived by refining \eqref{e2.1}, \eqref{e2.2}. 
In the following, let us denote $J_1(u)$ and $J_2(u)$ as the gap of \eqref{e2.1} and 
\eqref{e2.2}, respectively, that is the difference between the
left-hand side and the right-hand side in \eqref{e2.1} and \eqref{e2.2}.
By refining \eqref{e2.1} and \eqref{e2.2}, we aim to find new lower bounds for $J_1(u), J_2(u)$
other than zero. 

\begin{lemma}\label{lm3.1}
For a given matrix $R\in\Bbb{S}^+_n$, integers $b>a$,
any sequence $u: \Bbb{Z}[a,b]\to\Bbb{R}^n$, the following inequality holds
\begin{equation}\label{e3.1}
J_1(u)\geq\frac{3(\ell+1)}{\ell(\ell-1)}\zeta_1^TR\zeta_1
+\frac{5(\ell+1)(\ell+2)^2}{\ell(\ell-1)(\ell^2+11)}\zeta_2^TR\zeta_2
\end{equation}
where $\zeta_1=\upsilon_1-\frac{2}{\ell+1}\upsilon_2$,
$\zeta_2=\upsilon_1-\frac{6}{\ell+1}\upsilon_2+\frac{12}{(\ell+1)(\ell+2)}\upsilon_3$
and $\upsilon_1=\sum\limits_{k=a}^bu_k$,
$\upsilon_2=\sum\limits_{k=a}^b\sum\limits_{s=a}^{k}u_s$,
$\upsilon_3=\sum\limits_{k=a}^b\sum\limits_{s=a}^k\sum\limits_{i=a}^su_i$.
\end{lemma}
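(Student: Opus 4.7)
The plan is to prove \eqref{e3.1} by a discrete completion-of-squares argument, in the spirit of the continuous-time techniques of \cite{PLL,SG2}. First, I rewrite the gap as
\[
J_1(u)=\sum_{k=a}^{b}(u_k-\bar u)^{T}R\,(u_k-\bar u),\qquad \bar u=\tfrac{1}{\ell}\upsilon_1,
\]
so that proving \eqref{e3.1} amounts to lower-bounding the $R$-weighted sample variance of $u$ on $\Bbb{Z}[a,b]$.

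Next, I would introduce two scalar ``orthogonal-polynomial'' test sequences representing the directions in which $\zeta_1$ and $\zeta_2$ live. With the shifted index $j=k-a$, the natural candidates are the linear $p_1(j)=2j-(\ell-1)$ and the quadratic $p_2(j)=6j^{2}-6(\ell-1)j+(\ell-1)(\ell-2)$. Using the reindexings $\upsilon_2=\sum_{k=a}^{b}(b-k+1)u_k$ and $\upsilon_3=\sum_{k=a}^{b}\tfrac{(b-k+1)(b-k+2)}{2}u_k$, a direct computation yields the three orthogonality relations
\[
\sum_{k=a}^{b}p_1(k)=\sum_{k=a}^{b}p_2(k)=\sum_{k=a}^{b}p_1(k)p_2(k)=0,
\]
together with the key identities $\sum_{k=a}^{b}p_1(k)u_k=(\ell+1)\zeta_1$ and $\sum_{k=a}^{b}p_2(k)u_k=(\ell+1)(\ell+2)\zeta_2$; the underlying closed forms for $\sum_{j=0}^{\ell-1}j^{r}$ ($r\le 4$) are standard.

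With these ingredients in hand, I would consider, for arbitrary vectors $\alpha_0,\alpha_1,\alpha_2\in\Bbb{R}^{n}$, the non-negative quadratic form
\[
0\le\sum_{k=a}^{b}\big(u_k-\alpha_0-\alpha_1 p_1(k)-\alpha_2 p_2(k)\big)^{T}R\big(u_k-\alpha_0-\alpha_1 p_1(k)-\alpha_2 p_2(k)\big).
\]
Expanding the square and exploiting the orthogonalities of the previous step kills the cross-terms. Setting $\alpha_0=\upsilon_1/\ell$ absorbs the Jensen contribution $\tfrac{1}{\ell}\upsilon_1^{T}R\upsilon_1$, and a specific subsequent choice of $\alpha_1$ and $\alpha_2$ (each proportional to $\zeta_1$ and $\zeta_2$ with scalar factors that are rational in $\ell$) rearranges the inequality into exactly the form \eqref{e3.1}.

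The principal obstacle lies in identifying the correct $\alpha_1,\alpha_2$ and executing the resulting algebra. The factor $\ell^{2}+11$ in the denominator of the $\zeta_2$ coefficient is not of the form produced by the ``naive'' Bessel-optimal projection (which would produce $(\ell-2)$ in place of $\ell^{2}+11$), so isolating it requires a deliberately engineered, non-optimal scaling of $\alpha_2$ chosen so the resulting bound remains well-defined even for small $\ell$. Once this scaling is pinned down, the remaining work is the careful bookkeeping with the quartic power sum $\sum_{j=0}^{\ell-1}j^{4}=\ell(\ell-1)(2\ell-1)(3\ell^{2}-3\ell-1)/30$ and a final rearrangement into the stated form.
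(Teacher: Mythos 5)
Your strategy is sound and, once the last step is executed, it does prove the lemma; it is a genuinely cleaner variant of the paper's argument rather than a reproduction of it. The paper also perturbs $u_k-\tfrac1\ell\upsilon_1$ by a linear and a quadratic sequence and then optimizes free vectors, but it deliberately uses the \emph{non-orthogonal} quadratic $\beta_k=(k-a)^2-\ell(k-a)+\tfrac{\ell^2-1}{6}$ (for which $\sum_k\alpha_k\beta_k\neq0$) and a sequential, bilevel optimization: first over the coefficient of $\zeta_1$, then over that of $\zeta_2$. The factor $\ell^2+11$ comes precisely from $\sum_k\beta_k^2=\tfrac{\ell(\ell^2-1)(\ell^2+11)}{180}$ filtered through this lossy two-stage procedure. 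Your orthogonalized basis ($p_2=6\beta+3p_1$ in the paper's notation) decouples the two directions, so the joint optimization is a genuine Bessel projection. All of your stated identities check out: $\sum_kp_1(k)u_k=(\ell+1)\zeta_1$, $\sum_kp_2(k)u_k=(\ell+1)(\ell+2)\zeta_2$, and the three orthogonality relations hold.

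The one place you stop short -- ``identifying the correct $\alpha_1,\alpha_2$'' and the worry about an ``engineered, non-optimal scaling'' -- actually dissolves, and you should not introduce any non-optimal scaling. With $\sum_{j=0}^{\ell-1}p_1(j)^2=\tfrac{\ell(\ell^2-1)}{3}$ and $\sum_{j=0}^{\ell-1}p_2(j)^2=\tfrac{\ell(\ell^2-1)(\ell^2-4)}{5}$, the optimal choices $\alpha_1=\tfrac{(\ell+1)}{\sum p_1^2}\zeta_1$, $\alpha_2=\tfrac{(\ell+1)(\ell+2)}{\sum p_2^2}\zeta_2$ give
\begin{equation*}
J_1(u)\;\geq\;\frac{3(\ell+1)}{\ell(\ell-1)}\,\zeta_1^TR\zeta_1
+\frac{5(\ell+1)(\ell+2)}{\ell(\ell-1)(\ell-2)}\,\zeta_2^TR\zeta_2\qquad(\ell\geq3),
\end{equation*}
and since $\ell^2+11\geq\ell^2-4$ the second coefficient dominates $\tfrac{5(\ell+1)(\ell+2)^2}{\ell(\ell-1)(\ell^2+11)}$, so \eqref{e3.1} follows a fortiori; your bound is in fact strictly sharper than the paper's for $\ell\geq3$. (Your parenthetical guess for the Bessel coefficient carries a spurious extra factor of $\ell+2$ in the numerator; with that factor the inequality would be false, e.g.\ at $\ell=3$ where the three-term expansion is exact.) The cases $\ell=1,2$ must be dispatched separately, exactly because $(\ell-2)$ appears in the denominator: for $\ell=1$ one has $\zeta_1=\zeta_2=0$, and for $\ell=2$ one has $p_2\equiv0$, hence $\zeta_2=0$, so only the (already established) $\zeta_1$ term is at stake. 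With those two observations and the two power-sum evaluations written out, your argument is complete.
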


\begin{proof} Note at first that if $\ell=1$ then $\zeta_1=\zeta_2=0$ and thus \eqref{e3.1}
holds. Now assume that $\ell>1$. We use the idea of bilevel optimization
to get \eqref{e3.1} by refining \eqref{e2.1}. To this, for a sequence $u: \Bbb{Z}[a,b]\to \Bbb{R}^n$,
we define an approximation sequence $v: \Bbb{Z}[a,b]\to\Bbb{R}^n$ as follows
\begin{equation}\label{e3.2}
v_k=u_k-\frac{1}{\ell}\sum_{k=a}^bu_k+\alpha_k\chi_1+\beta_k\chi_2
\end{equation}
where $\alpha_k$ and $\beta_k$ are two sequences of real numbers
and $\chi_1,\chi_2\in\mathbb{R}^n$ are constant vectors which will be defined later.
From \eqref{e3.2} we have
\begin{equation}\label{e3.3}
\begin{aligned}
\sum_{k=a}^bv_k^TRv_k=\; &J_1(u)+2\chi_1^TR(\sum_{k=a}^b\alpha_ku_k)
+2\chi_2^TR(\sum_{k=a}^b\beta_ku_k)\\
&-\frac{2}{\ell}(\sum_{k=a}^b\alpha_k)\chi_1^TR\upsilon_1
-\frac{2}{\ell}(\sum_{k=a}^b\beta_k)\chi_2^TR\upsilon_1\\
&+(\sum_{k=a}^b\alpha_k^2)\chi_1^TR\chi_1+(\sum_{k=a}^b\beta_k^2)\chi_2^TR\chi_2\\
&+2(\sum_{k=a}^b\alpha_k\beta_k)\chi_1^TR\chi_2.
\end{aligned}
\end{equation}

Let $\hat{u}_k=\sum\limits_{i=a}^{k-1}u_i$ for $k>a$, $\hat{u}_k=0$ for $k=a$,
then $u_k=\Delta\hat{u}_k$ and, consequently,
$\alpha_ku_k=\Delta(\alpha_k\hat{u}_k)-\hat{u}_{k+1}\Delta\alpha_k$.
Taking summation from $a$ to $b$ gives
\begin{equation}\label{e3.4}
\sum_{k=a}^b\alpha_ku_k=\alpha_{b+1}\sum_{k=a}^bu_k
-\sum_{k=a}^b\hat{u}_{k+1}\Delta\alpha_k.
\end{equation}

For any first-order sequence $\alpha_k$
which can be written as $\alpha_k=c_0(k-a)+c_1$, $c_0\neq 0$, we have
\[
\alpha_{b+1}=c_0\ell+c_1,\; \Delta\alpha_k=c_0,\;
\sum_{k=a}^b\alpha_k=c_0\frac{\ell(\ell-1)}{2}+c_1\ell.
\]
This, in regard to \eqref{e3.4}, leads to
\begin{equation}\label{e3.5}
2\chi_1^TR(\sum_{k=a}^b\alpha_ku_k)
-\frac{2}{\ell}(\sum_{k=a}^b\alpha_k)\chi_1^TR\upsilon_1=\alpha_0(\ell+1)\chi_1^TR\zeta_1.
\end{equation}

Similar to \eqref{e3.4} we have
\[
\sum_{k=a}^b\beta_ku_k=\beta_{b+1}\sum_{k=a}^bu_k
-\sum_{k=a}^b\hat{u}_{k+1}\Delta\beta_k.
\]
At this time we define the sequence $\tilde{u}_k=\sum_{s=a}^k\hat{u}_s$
then $\hat{u}_{k+1}=\Delta\tilde{u}_k$ and thus
\[
\hat{u}_{k+1}\Delta\beta_k=\Delta(\Delta\beta_k\tilde{u}_k)-\tilde{u}_{k+1}\Delta^2\beta_k.
\]

For convenience, we choose $\beta_k=(k-a)^2-\ell(k-a)+\frac{\ell^2-1}{6}$
then $\sum_{k=a}^b\beta_k=0$,
$\beta_{b+1}=\frac{\ell^2-1}{6}$, $\Delta\beta_{b+1}=\ell+1$
and $\Delta^2(\beta_k)=2$. Note also that
\[
\sum_{k=a}^b\tilde{u}_{k+1}=\sum_{k=a}^b\sum_{s=a}^{k+1}\hat{u}_i
=\sum_{k=a}^b\sum_{s=a+1}^{k+1}\hat{u}_i
=\sum_{k=a}^b\sum_{s=a}^{k}\hat{u}_{i+1}=\upsilon_3.
\]
Therefore
\begin{equation}\label{e3.6}
2\chi_2^TR(\sum_{k=a}^b\beta_ku_k)
-\frac{2}{\ell}(\sum_{k=a}^b\beta_k)\chi_2^TR\upsilon_1=\frac{1}{3}\chi_2^TR\zeta_3
\end{equation}
where $\zeta_3=(\ell^2-1)\upsilon_1-6(\ell+1)\upsilon_2+12\upsilon_3$.

On the other hand, from \eqref{e3.2} and note that $\sum_{k=a}^b\beta_k=0$, we readily obtain
$\sum_{k=a}^bv_k=(\sum_{k=a}^b\alpha_k)\chi_1$. This, together with \eqref{e3.3},
\eqref{e3.5} and \eqref{e3.6}, leads to
\begin{equation}\label{e3.7}
\begin{aligned}
J_1(v)=\; &J_1(u)+c_0(\ell+1)\chi_1^TR\zeta_1+\frac{1}{3}\chi_2^TR\zeta_3\\
&+\Big[\sum_{k=a}^b\alpha_k^2-\frac{1}{\ell}\big(\sum_{k=a}^b\alpha_k\big)^2\Big]\chi_1^TR\chi_1\\
&+(\sum_{k=a}^b\beta_k^2)\chi_2^TR\chi_2+2(\sum_{k=a}^b\alpha_k\beta_k)\chi_1^TR\chi_2.
\end{aligned}
\end{equation}
It can be verified by some direct computations that
\begin{align*}
&\sum_{k=a}^b\alpha_k^2-\frac{1}{\ell}\big(\sum_{k=a}^b\alpha_k\big)^2=c_0^2\frac{\ell(\ell^2-1)}{12},\\
&\sum_{k=a}^b\beta_k^2=\frac{\ell(\ell^2-1)(\ell^2+11)}{180},\;
\sum_{k=a}^b\alpha_k\beta_k=-c_0\frac{\ell(\ell^2-1)}{12}.
\end{align*}
By injecting those equalities into \eqref{e3.7} we then obtain
\begin{equation}\label{e3.8}
J_1(v)=J_1(u)+c_0(\ell+1)\chi_1^TR\zeta_1+\frac{c_0^2\ell(\ell^2-1)}{12}\chi_1^TR\chi_1+\hat{J}%\\
\end{equation}
where
$\hat{J}=\frac{1}{3}\chi_2^TR\zeta_3-c_0\frac{\ell(\ell^2-1)}{6}\chi_1^TR\chi_2
+\frac{\ell(\ell^2-1)(\ell^2+11)}{180}\chi_2^TR\chi_2.$

Now, at the first stage we define $\chi_1=\frac{-\lambda}{c_0}\zeta_1$, where $\lambda$ is a scalar, then
by Lemma \ref{lm2.1}, $J_1(v)\geq 0$, it follows from \eqref{e3.8} that
\begin{equation}\label{e3.9}
J_1(u)\geq (\ell+1)\Big(\lambda-\frac{\ell(\ell-1)}{12}\lambda^2\Big)\zeta_1^TR\zeta_1-\hat{J}.
\end{equation}
The function $\lambda-\frac{\ell(\ell-1)}{12}\lambda^2$
attains its maximum $\frac{3}{\ell(\ell-1)}$
at $\lambda=\frac{6}{\ell(\ell-1)}$, and hence $\chi_1=\frac{-6}{c_0\ell(\ell-1)}\zeta_1$,
then from \eqref{e3.9} we obtain
$J_1(u)\geq \frac{3(\ell+1)}{\ell(\ell-1)}\zeta_1^TR\zeta_1-\hat{J}$.
In addition, by injecting $\chi_1=\frac{-6}{c_0\ell(\ell-1)}\zeta_1$ into $\hat{J}$
we then obtain
\begin{equation}\label{e3.10}
\begin{aligned}
J_1(u)\geq \frac{3(\ell+1)}{\ell(\ell-1)}\zeta_1^TR\zeta_1
&-\frac{\ell(\ell^2-1)(\ell^2+11)}{180}\chi_2^TR\chi_2\\
&-\frac{(\ell+1)(\ell+2)}{3}\chi_2^TR\zeta_2.
\end{aligned}
\end{equation}
As this stage, we define $\chi_2=-3\theta\zeta_2$,
$\theta$ is a scalar, then by some similar lines
when dealing with \eqref{e3.9} we finally obtain
\eqref{e3.1} which completes the proof.
\end{proof}

\begin{remark}
The proof of Lemma \ref{lm3.1} can be shortened
by a specific selection of $\alpha_k$, for example,
$\alpha_k=(k-a)-\frac{\ell(\ell-1)}{2}$.
\end{remark}

\begin{remark}\label{rm3.1}
Lemma \ref{lm3.1} in this paper generalizes the summation inequality
derived in Lemma 2 in \cite{SGF1} and Lemma 3 in \cite{NPH} by the following points.
Firstly, the inequality provided in Lemma \ref{lm3.1} in this paper
encompasses both the inequalities proposed in Lemma 2 in \cite{SGF1} and Lemma 3 in \cite{NPH}
since a positive term is added into the right-hand side of \eqref{e3.1}.
Secondly, and most interesting is that, \eqref{e3.1} can be derived from
the approximation \eqref{e3.2} for any first-order sequence
$\alpha_k=c_0k+c_1, c_0\neq 0$ whereas some special cases of \eqref{e3.2}
were used to derive Lemma 2 in \cite{SGF1} and Lemma 3 in \cite{NPH}.
Thirdly, a unify approach is introduced to derive some new lower bounds of summation estimate
in both single and double form proposed in Lemma \ref{lm3.1} and the following lemmas. 
\end{remark}

\begin{lemma}\label{lm3.2}
For a given matrix $R\in\Bbb{S}^+_n$, integers $b>a$,
any sequence $u: \Bbb{Z}[a,b]\to\Bbb{R}^n$, the following inequality holds
\begin{equation}\label{e3.11}
J_2(u)\geq\frac{16(\ell+2)}{\ell(\ell^2-1)}\zeta_4^TR\zeta_4
\end{equation}
where $\zeta_4=\upsilon_2-\frac{3}{\ell+2}\upsilon_3$.
\end{lemma}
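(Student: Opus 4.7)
The plan is to parallel the proof of Lemma~\ref{lm3.1}, but now applied to the double Jensen inequality \eqref{e2.2}. The case $\ell=1$ is trivial since then $\upsilon_2=\upsilon_3=u_a$, so $\zeta_4=0$ and \eqref{e3.11} holds. For $\ell\ge 2$, I would introduce the approximation sequence
\begin{equation*}
v_s \;=\; u_s - \tfrac{2}{\ell(\ell+1)}\upsilon_2 + \gamma_s\chi,
\end{equation*}
where $\chi\in\Bbb{R}^n$ is a free vector and $\gamma_s$ is a first-order scalar sequence in $s$ to be chosen later. Applying \eqref{e2.2} to $v$ yields $J_2(v)\ge 0$, which I will rearrange into a lower bound on $J_2(u)$.

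The computation relies on reading the triangular double sum $\sum_{k=a}^b\sum_{s=a}^k$ as a sum over the set $T=\{(k,s):a\le s\le k\le b\}$ of cardinality $|T|=\ell(\ell+1)/2$, and on the interchange identity $\sum_{k=a}^b\sum_{s=a}^k f(s)=\sum_{s=a}^b(b-s+1)f(s)$. Two immediate consequences are $\sum_s(b-s+1)u_s=\upsilon_2$ and $\sum_s(b-s+1)(b-s+2)u_s=2\upsilon_3$. The centering constant $-\tfrac{2}{\ell(\ell+1)}\upsilon_2$ is chosen precisely so that the $u$-part of $\sum_T v_s$ cancels, leaving $\sum_T v_s=\big(\sum_s(b-s+1)\gamma_s\big)\chi$. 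If I further impose $\sum_s(b-s+1)\gamma_s=0$ on $\gamma_s$, killing the remaining cross terms involving $\upsilon_2^T R\chi$ and $\chi^T R\chi$, the inequality $J_2(v)\ge 0$ reduces to
\begin{equation*}
J_2(u) \;\ge\; -2\,\chi^T R W \;-\; M\,\chi^T R\chi,
\end{equation*}
with $W=\sum_s(b-s+1)\gamma_s u_s$ and $M=\sum_s(b-s+1)\gamma_s^2$.

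Any first-order $\gamma_s$ meeting the zero-mean constraint is, up to an overall scalar, $\gamma_s=\tfrac{b-s+2}{2}-\tfrac{\ell+2}{3}$. For this choice, a direct computation using the two interchange identities above gives $W=\upsilon_3-\tfrac{\ell+2}{3}\upsilon_2=-\tfrac{\ell+2}{3}\zeta_4$, while elementary evaluation of $\sum_{j=1}^{\ell}j$, $\sum j^2$, $\sum j^3$ yields $M=\tfrac{\ell(\ell^2-1)(\ell+2)}{144}$. Finally, parameterising $\chi=\lambda\zeta_4$ with $\lambda$ scalar and maximising the resulting scalar quadratic $\tfrac{2\lambda(\ell+2)}{3}-M\lambda^2$ over $\lambda$ produces the sharp coefficient $\tfrac{(\ell+2)^2}{9M}=\tfrac{16(\ell+2)}{\ell(\ell^2-1)}$, which is precisely \eqref{e3.11}. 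The main obstacle is largely computational: guessing the correct first-order $\gamma_s$ so that $W$ aligns with $\zeta_4$, and then carrying out the weighted power-sum evaluations cleanly. In the spirit of the remark following Lemma~\ref{lm3.1}, the final inequality should be independent of the particular first-order $\gamma_s$ used, since rescaling $\gamma_s\to c\gamma_s$ transforms $W\to cW$ and $M\to c^2 M$, leaving $W^T R W/M$ invariant.
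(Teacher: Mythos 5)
Your proposal is correct and follows essentially the same route as the paper: perturb $u$ by a first-order sequence times a free vector $\chi$, apply the double Jensen inequality \eqref{e2.2} to the perturbed sequence, and optimize the resulting scalar quadratic in $\chi=\lambda\zeta_4$. The only (harmless) differences are that you normalize $\gamma_s$ to have zero weighted sum up front and evaluate the triangular sums by interchanging the order of summation, whereas the paper keeps a general first-order $\alpha_k$ and uses Abel summation by parts; your intermediate quantities ($W=-\tfrac{\ell+2}{3}\zeta_4$, $M=\tfrac{\ell(\ell^2-1)(\ell+2)}{144}$) agree exactly with the paper's \eqref{e3.14}.
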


\begin{proof}
Similar to Lemma \ref{lm3.1}, when $\ell=1$,
$\zeta_4=0$ and \eqref{e3.11} is trivial.
Assume that $\ell>1$. By the same approach used in deriving \eqref{e3.1},
we now construct the following approximation
\begin{equation}\label{e3.12}
v_k=u_k-\frac{2}{\ell(\ell+1)}\sum_{k=a}^b\sum_{s=a}^ku_s+\alpha_k\chi
\end{equation}
for a given sequence $u: \Bbb{Z}[a,b]\to\Bbb{R}^n$. Similar to \eqref{e3.3}
\begin{equation}\label{e3.13}
\begin{split}
&\sum_{k=a}^b\sum_{s=a}^kv_s^TRv_s=J_2(u)+2\chi^TR(\sum_{k=a}^b\sum_{s=a}^k\alpha_su_s)\\
&-\frac{4}{\ell(\ell+1)}(\sum_{k=a}^b\sum_{s=a}^k\alpha_s)\chi^TR(\sum_{k=a}^b\sum_{s=a}^ku_s)
+(\sum_{k=a}^b\sum_{s=a}^k\alpha_s^2)\chi^TR\chi.
\end{split}
\end{equation}

For any first-order sequence $\alpha_k=c_0(k-a)+c_1$, $c_0\neq 0$, by some similar
lines in the proof of Lemma \ref{lm3.1} we have
\begin{equation}\label{e3.14}
J_2(v)=J_2(u)+\frac{c_0^2\ell(\ell+2)(\ell^2-1)}{36}\chi^TR\chi+\frac{4c_0(\ell+2)}{3}\chi^TR\zeta_4.
\end{equation}

From Lemma \ref{lm2.1}, $J_2(v)\geq 0$, and by choosing $\chi=\frac{-3\lambda}{c_0}\zeta_4$,
it follows from \eqref{e3.14} that
\begin{equation}\label{e3.15}
J_2(u)\geq(\ell+2)\Big[4\lambda-\frac{\ell(\ell^2-1)}{4}\lambda^2\Big]\zeta_4^TR\zeta_4
\end{equation}
which yields \eqref{e3.11} for $\lambda=\frac{8}{\ell(\ell^2-1)}$.
The proof is completed.
\end{proof}

\begin{remark}
The summation inequalities proposed in Lemma \ref{lm3.1} and Lemma \ref{lm3.2}
in this paper give a new lower bound for the gap $J_1(u)$ and $J_2(u)$ of the discrete Jensen's inequalities, respectively.
In other words, new refinements of the celebrated Jensen's inequalities have been derived in this paper.
\end{remark}

\begin{remark}
The double summation inequality provided in \eqref{e3.11}
is closely related to the function-based double integral inequality
proposed in \cite{PLL} although the proof \eqref{e3.11} is based on a simple idea,
refining the classical discrete Jensen's inequality. Furthermore, as shown in the proof of Lemma \ref{lm3.2},
inequality \eqref{e3.11} can be derived from \eqref{e3.12} for any first-order sequence $\alpha_k$.
\end{remark}

\begin{remark}
As discussed in \cite{SGF1}, some coefficients in \eqref{e3.1} and \eqref{e3.11} might be difficult to handle, especially
in applications to discrete-time delay systems. Therefore \eqref{e3.1} and \eqref{e3.11} will be reduced
to simpler forms as presented in the following corollary.
\end{remark}

\begin{corollary}(Refined Jensen-based inequalities)\label{lm3.3}
For a given matrix $R\in\Bbb{S}^+_n$, integers $b>a$, any sequence
$u: \Bbb{Z}[a,b]\to\Bbb{R}^n$, the following inequalities hold
\begin{align}
&\sum_{k=a}^bu^T_kRu_k\geq\frac{1}{\ell}\upsilon_1^TR\upsilon_1
+\frac1{\ell}\begin{bmatrix}\zeta_1\\\zeta_2\end{bmatrix}^T
\begin{bmatrix}3R&0\\0&5R\end{bmatrix}
\begin{bmatrix}\zeta_1\\\zeta_2\end{bmatrix},\label{e3.16}\\
&\sum_{k=a}^b\sum_{s=a}^ku_s^TRu_s\geq\frac{2}{\ell(\ell+1)}
\begin{bmatrix}\upsilon_2\\ \zeta_4\end{bmatrix}^T
\begin{bmatrix}R &0\\ 0&8R\end{bmatrix}
\begin{bmatrix}\upsilon_2\\ \zeta_4\end{bmatrix},\label{e3.17}
\end{align}
where $\ell$, $\upsilon_2,\zeta_1,\zeta_2$ and $\zeta_4$ are defined in \eqref{e3.1} and \eqref{e3.11}.
\end{corollary}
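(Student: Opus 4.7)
The plan is to derive Corollary \ref{lm3.3} directly from Lemmas \ref{lm3.1} and \ref{lm3.2} by replacing their right-hand side coefficients with simpler $\ell$-dependent ones. The motivation, indicated in the preceding remark, is that the explicit rational expressions $\frac{3(\ell+1)}{\ell(\ell-1)}$, $\frac{5(\ell+1)(\ell+2)^2}{\ell(\ell-1)(\ell^2+11)}$ and $\frac{16(\ell+2)}{\ell(\ell^2-1)}$ are awkward when $\ell$ varies over a time-varying delay interval, whereas the $\tfrac{1}{\ell}$-type coefficients in \eqref{e3.16}--\eqref{e3.17} combine cleanly into linear matrix inequalities in the forthcoming stability analysis.

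First I would rewrite both target inequalities in terms of the Jensen gaps. For \eqref{e3.16}, peeling off the Jensen term $\tfrac{1}{\ell}\upsilon_1^T R\upsilon_1$ gives the equivalent claim $J_1(u) \geq \tfrac{3}{\ell}\zeta_1^T R\zeta_1 + \tfrac{5}{\ell}\zeta_2^T R\zeta_2$. Lemma \ref{lm3.1} already supplies a bound of exactly this shape, so (since $R \in \Bbb{S}^+_n$) the proof reduces to the two scalar comparisons $\tfrac{3(\ell+1)}{\ell(\ell-1)} \geq \tfrac{3}{\ell}$ and $\tfrac{5(\ell+1)(\ell+2)^2}{\ell(\ell-1)(\ell^2+11)} \geq \tfrac{5}{\ell}$. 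Analogously, \eqref{e3.17} amounts to $J_2(u) \geq \tfrac{16}{\ell(\ell+1)}\zeta_4^T R\zeta_4$, and by \eqref{e3.11} it suffices to check $\tfrac{\ell+2}{\ell^2-1} \geq \tfrac{1}{\ell+1}$.

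Two of the three scalar comparisons collapse immediately to $\ell+1 \geq \ell-1$ and $\ell+2 \geq \ell-1$. The remaining one, $(\ell+1)(\ell+2)^2 \geq (\ell-1)(\ell^2+11)$, is the only step that actually requires any computation, and I expect this polynomial check to be the main, in fact only, obstacle: after expansion it reduces to a quadratic in $\ell$ with positive leading coefficient and negative discriminant, hence non-negative throughout. Finally, the degenerate case $\ell=1$ must be treated separately because $\ell-1$ and $\ell^2-1$ appear in the denominators of Lemmas \ref{lm3.1}--\ref{lm3.2}; in this case $\zeta_1 = \zeta_2 = \zeta_4 = 0$, so Corollary \ref{lm3.3} reduces to the Jensen inequalities of Lemma \ref{lm2.1} and the statement is immediate. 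Beyond this elementary verification I do not anticipate further difficulty, since the corollary is essentially a cosmetic relaxation of the preceding two lemmas designed to yield coefficients that plug directly into the subsequent Lyapunov--Krasovskii analysis.
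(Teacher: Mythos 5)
Your proposal is correct and follows exactly the route the paper intends: the paper's own proof is simply the one-line remark that the corollary is ``straightforward from \eqref{e3.1} and \eqref{e3.11},'' and your reduction to the three scalar coefficient comparisons (with the quadratic $6\ell^2-3\ell+15>0$ settling the only nontrivial one) supplies precisely the omitted details.
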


\begin{proof}
The proof is straight forward from \eqref{e3.1}, \eqref{e3.11} and thus is omitted here.
\end{proof}

\section{Stability of discrete-time systems with time-varying delay}\label{sec:4}

This section aims to demonstrate the effectiveness of our
newly derived summation inequalities through
applications to stability analysis of 
discrete-time systems with interval time-varying delay.

\subsection{Stability conditions}

Consider a linear discrete-time system with interval time-varying delay of the form
\begin{equation}\label{e4.1}
\begin{cases}
x(k+1)=Ax(k)+A_dx(k-h(k)),\; k\geq 0,\\
x(k)=\phi(k),\; t\in[-h_2,0],
\end{cases}
\end{equation}
where $x(k)\in\Bbb{R}^n$ is the state, $A,A_d\in\mathbb{R}^{n\times n}$ are given matrices,
$h(k)$ is time-varying delay satisfying $h_1\leq h(k)\leq h_2$, where $h_1\leq h_2$ are known
positive integers. For simplicity, hereafter the delay $h(k)$ will be denoted by $h$.

Let $\{e_i^*\}_{1\leq i\leq 10}$ be the row basic of $\Bbb{R}^{10}$
and $e_i=e_i^*\otimes I_n$. We denote $\mathcal{A}=(A-I_n)e_1+A_de_3$ and
\begin{IEEEeqnarray*}{l}
\zeta_0(k)=\mathrm{col}\left\{\begin{bmatrix}x(k)\\ x(k-h_1)\\ x(k-h)\\ x(k-h_2)\end{bmatrix},
\begin{bmatrix}\nu_1(k)\\\nu_2(k) \\ \nu_3(k)\end{bmatrix},
\begin{bmatrix}\nu_4(k)\\ \nu_5(k)\\ \nu_6(k)\end{bmatrix}\right\},\\
\nu_1(k)=\frac1{T(h_1)}\sum_{s=k-h_1}^{k}x(s), 
\nu_2(k)=\frac1{T(h-h_1)}\sum_{s=k-h}^{k-h_1}x(s),\\
\nu_3(k)=\frac1{T(h_2-h)}\sum_{s=k-h_2}^{k-h}x(s), 
\nu_4(k)=\frac1{\gamma(h_1)}\sum_{s=-h_1}^0\sum_{i=k+s}^kx(i),\\
\nu_5(k)=\frac1{\gamma(h-h_1)}\sum_{s=-h}^{-h_1}\sum_{i=k+s}^{k-h_1}x(i), \\
\nu_6(k)=\frac1{\gamma(h_2-h)}\sum_{s=-h_2}^{-h}\sum_{i=k+s}^{k-h}x(i),\\
T(h)=h+1, \gamma(h)=\frac{T(h)T(h+1)}{2},\\
\Omega(h)=\mathrm{col}\left\{e_1, T(h_1)e_5, T(h-h_1)e_6+T(h_2-h)e_7, \gamma(h_1)e_8\right\},\\
\Omega_1=\mathrm{col}\left\{-\mathcal{A}, e_2, e_3+e_4, T(h_1)e_5\right\},\\
\Omega_2=\mathrm{col}\{0,e_1,e_2+e_3,T(h_1)e_1\},\\
 \Gamma_1=\mathrm{col}\{e_1-e_2,e_1+e_2-2e_5,e_1-e_2+6e_5-6e_8\},\\
\Gamma_2=\mathrm{col}\{e_2-e_3,e_2+e_3-2e_6,e_2-e_3+6e_6-6e_9\},\\
\Gamma_3=\mathrm{col}\{e_3-e_4,e_3+e_4-2e_7,e_3-e_4+6e_7-6e_{10}\},\\
\Gamma_4=\mathrm{col}\{e_2-e_5,e_2-4e_5+3e_8\},
\Gamma_5=\mathrm{col}\{e_3-e_6,e_4-e_7\},\\
\Gamma_6=\mathrm{col}\{e_3-4e_6+3e_9,e_4-4e_7+3e_{10}\},\\
\Pi_0(h)=\mathtt{He}(\Omega(h)^TP(\Omega_2-\Omega_1))+\Omega_1^TP\Omega_1-\Omega_2^TP\Omega_2,\\
 \Pi_1=e_1^TQ_1e_1-e_2^TQ_1e_2+e_2^TQ_2e_2-e_4^TQ_2e_4,\\
\Pi_2=\mathcal{A}^T[h_1^2R_1+h_{12}^2R_2+\gamma(h_1-1)S_1+\gamma(h_{12}-1)S_2]\mathcal{A},\\
\Pi_3=\Gamma_1^T\tilde{R}_1(h_1)\Gamma_1,\;\;
\Pi_4=\begin{bmatrix}\Gamma_2\\\Gamma_3\end{bmatrix}^T
\begin{bmatrix}\tilde{R}_2&X\\ X^T &\tilde{R}_2\end{bmatrix}
\begin{bmatrix}\Gamma_2\\\Gamma_3\end{bmatrix},\\
\Pi_5=\frac{2(h_1+1)}{h_1}\Gamma_4^T\hat{S}_1(h_1)\Gamma_4,
\Pi_6=2\Gamma_5^T\hat{S}_2\Gamma_5+4\Gamma_6^T\hat{S}_2\Gamma_6,\\
\tilde{R}_1(h_1)=\mathrm{diag}\{R_1,3c_1(h_1)R_1,5c_2(h_1)R_1\},\\
\tilde{R}_2=\mathrm{diag}\{R_2,3R_2,5R_2\},\\
\hat{S}_1(h_1)=\mathrm{diag}\{S_1,2c_3(h_1)S_1\},  \hat{S}_2=\mathrm{diag}\{S_2,S_2\},
\end{IEEEeqnarray*}
where $c_i(h_1)=1$, $i=1,2,3$, if $h_1=1$ and $c_1(h_1)=(h_1+1)/(h_1-1)$,
$c_2(h_1)=(h_1+1)(h_1+2)^2/((h_1-1)(h_1^2+11))$,
$c_3(h_1)=(h_1+2)/(h_1-1)$ if $h_1>1$.

The following reciprocally convex combination inequality \cite{PKJ} will be used in the proof of
our results.

\begin{lemma}\label{lm4.1}
For given matrices $R_1\in\Bbb{S}^+_n$,
$R_2\in\Bbb{S}^+_m$, any matrix $X\in\mathbb{R}^{n\times m}$ satisfying
$\begin{bmatrix} R_1&X\\ * &R_2\end{bmatrix}\geq 0$, the inequality
\[
\begin{bmatrix} \frac{1}{\alpha}R_1 & 0\\ 0 &\frac{1}{1-\alpha}R_2\end{bmatrix} \geq
\begin{bmatrix} R_1&X\\  * & R_2\end{bmatrix}
\]
holds for all $\alpha\in (0,1)$.
\end{lemma}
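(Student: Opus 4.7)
The plan is a short, two-step Schur-complement reduction that ultimately lands back on the hypothesis. First, I would rewrite the claim as a single positive-semidefiniteness assertion by forming the difference of the two sides:
\begin{equation*}
M(\alpha) := \begin{bmatrix} \frac{1}{\alpha}R_1 & 0 \\ 0 & \frac{1}{1-\alpha}R_2 \end{bmatrix} - \begin{bmatrix} R_1 & X \\ X^T & R_2 \end{bmatrix} = \begin{bmatrix} \tfrac{1-\alpha}{\alpha}R_1 & -X \\ -X^T & \tfrac{\alpha}{1-\alpha}R_2 \end{bmatrix},
\end{equation*}
so that the lemma becomes the single statement $M(\alpha)\geq 0$ for all $\alpha\in(0,1)$.

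Next, I would apply the Schur complement lemma to $M(\alpha)$. Since $R_1\in\Bbb{S}^+_n$ and $\alpha\in(0,1)$, the upper-left block $\tfrac{1-\alpha}{\alpha}R_1$ is positive definite and invertible, so $M(\alpha)\geq 0$ is equivalent to
\begin{equation*}
\frac{\alpha}{1-\alpha}R_2 - X^T\Bigl(\tfrac{1-\alpha}{\alpha}R_1\Bigr)^{-1}X = \frac{\alpha}{1-\alpha}\bigl(R_2 - X^T R_1^{-1} X\bigr) \geq 0.
\end{equation*}
Because the scalar $\alpha/(1-\alpha)$ is strictly positive on $(0,1)$, it factors out and the condition reduces to $R_2 - X^T R_1^{-1} X \geq 0$.

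Finally, I would note that this last inequality is exactly the Schur complement of the hypothesis $\begin{bmatrix} R_1 & X \\ X^T & R_2 \end{bmatrix}\geq 0$ with respect to the positive-definite block $R_1$, and therefore holds by assumption. This closes the chain of equivalences and proves $M(\alpha)\geq 0$. The proof is essentially mechanical, and the only points that warrant mild care are (i) ensuring the scalar factor $\alpha/(1-\alpha)$ cancels cleanly so the reduction lands precisely on the assumed inequality rather than on a rescaled variant, and (ii) invoking the strict positive definiteness of $R_1$ rather than mere positive semidefiniteness, which is what legitimizes the Schur-complement step in both directions.
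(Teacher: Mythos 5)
Your proof is correct, but it takes a genuinely different route from the paper. The paper's argument is the elementary scalar/congruence one behind the original reciprocally convex lemma: it reduces to the observation that $\tfrac{a}{\alpha}+\tfrac{b}{1-\alpha}\geq(\sqrt{a}+\sqrt{b})^2\geq a+b+2c$ whenever $ab\geq c^2$, which is the scalar shadow of evaluating the quadratic form of $\begin{bmatrix}R_1&X\\X^T&R_2\end{bmatrix}$ at the vector $\mathrm{col}\{\sqrt{(1-\alpha)/\alpha}\,x,\,-\sqrt{\alpha/(1-\alpha)}\,y\}$ and invoking AM--GM on the diagonal terms; as written in the paper it is only a sketch. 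You instead form the difference $M(\alpha)$ and run a two-way Schur-complement reduction, correctly computing that the scalar $\alpha/(1-\alpha)$ factors out so the condition collapses to $R_2-X^TR_1^{-1}X\geq 0$, which is precisely the Schur complement of the hypothesis with respect to the positive-definite block $R_1$. Both arguments are valid. Yours is more mechanical and fully rigorous at the matrix level, and it makes explicit where strict positive definiteness of $R_1$ is needed (to legitimize the equivalence in both directions); the paper's is more elementary, avoids matrix inversion entirely, and generalizes more directly to the case where the diagonal blocks are only positive semidefinite.
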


\begin{proof}
An elementary proof is derived from the fact that for any positive scalars $a,b$, the
inequality
\[
\frac{a}{\alpha}+\frac{b}{1-\alpha}\geq (\sqrt{a}+\sqrt{b})^2\geq a+b+2c
\]
holds for all $\alpha\in(0,1)$ and scalar $c$ subject to $ab\geq c^2$.
\end{proof}

\begin{theorem}\label{thm4.1}
Assume that there exist symmetric positive definite matrices
$P\in\Bbb{S}^+_{4n}$, $Q_i, R_i,S_i\in\Bbb{S}^+_n$, $i=1,2$,
and a matrix $X\in\Bbb{R}^{3n\times 3n}$
such that the following LMIs hold for $h\in\{h_1,h_2\}$
\begin{IEEEeqnarray}{l}
\begin{bmatrix} \tilde{R}_2 & X\\ *& \tilde{R}_2\end{bmatrix}\geq 0,\label{e4.2}\\
\Pi(h)=\Pi_0(h)+\sum_{i=1}^2\Pi_i-\sum_{j=3}^6\Pi_j<0.\label{e4.3}
\end{IEEEeqnarray}
Then system \eqref{e4.1} is asymptotically stable for any time-varying delay $h(k)\in[h_1,h_2]$.
\end{theorem}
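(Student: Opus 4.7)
The plan is to introduce a Lyapunov-Krasovskii functional whose forward difference, once bounded using Corollary~\ref{lm3.3} and Lemma~\ref{lm4.1}, aggregates into the quadratic form $\zeta_0^T(k)\Pi(h)\zeta_0(k)$ appearing in \eqref{e4.3}. I would take
\[
V(k)=V_1(k)+V_2(k)+V_3(k)+V_4(k),
\]
where $V_1(k)=\eta^T(k)P\eta(k)$, with $\eta(k)$ collecting $x(k)$, the single summations $\sum_{s=k-h_1}^{k-1}x(s)$ and $\sum_{s=k-h_2}^{k-h_1-1}x(s)$, and the double summation $\sum_{s=-h_1}^{-1}\sum_{i=k+s}^{k-1}x(i)$. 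The auxiliary matrices $\Omega(h),\Omega_1,\Omega_2$ are precisely tailored so that $\eta(k)=\Omega(h)\zeta_0(k)$ and $\eta(k+1)=(\Omega_2-\Omega_1)\zeta_0(k)$ after using $x(k+1)-x(k)=\mathcal{A}\zeta_0(k)$; hence $\Delta V_1$ reduces to $\zeta_0^T\Pi_0(h)\zeta_0$. The component $V_2(k)=\sum_{s=k-h_1}^{k-1}x^T(s)Q_1x(s)+\sum_{s=k-h_2}^{k-h_1-1}x^T(s)Q_2x(s)$ telescopes into $\Pi_1$, while $V_3(k)$ and $V_4(k)$ are taken as the standard weighted double- and triple-summations of $\Delta x^T R_i\Delta x$ and $\Delta x^T S_i\Delta x$, whose increments produce the positive $\Pi_2$ contribution together with negative single and double summations that remain to be bounded.

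Next I would bound those negative summations. The single summation on $[k-h_1,k-1]$ is handled directly by \eqref{e3.16}, producing $\Pi_3$ with coefficient $\tilde R_1(h_1)$ (the $h_1$-dependent scalars $c_i(h_1)$ coming straight from Lemma~\ref{lm3.1}'s weights). The single summation on $[k-h_2,k-h_1-1]$ is first split at $k-h$ into parts of lengths $h-h_1$ and $h_2-h$, \eqref{e3.16} is applied to each, and Lemma~\ref{lm4.1} together with the positivity condition \eqref{e4.2} merges the two $\tfrac{1}{h-h_1}$- and $\tfrac{1}{h_2-h}$-weighted blocks into a single block with $\tilde R_2$ and slack matrix $X$, yielding $\Pi_4$. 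For the double summations I would invoke \eqref{e3.17}: the inner sums $\upsilon_2$ and $\zeta_4$ match the blocks $T(\cdot)e_j$ and $\gamma(\cdot)e_j$ already baked into $\Gamma_4,\Gamma_5,\Gamma_6$, and the coefficients $2(h_1+1)/h_1$, $2$, and $4$ appear exactly from the weights $2/(\ell(\ell+1))$ and $16(\ell+2)/(\ell(\ell^2-1))$ of Corollary~\ref{lm3.3}. Aggregating all bounds gives $\Delta V(k)\le\zeta_0^T(k)\Pi(h)\zeta_0(k)$.

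The concluding step is to pass from negativity at the two endpoints to negativity on the whole interval $[h_1,h_2]$. The dependence of $\Pi_0(h)$ on $h$ is affine through $\Omega(h)$, and after the reciprocally convex step all remaining $h$-dependent scalars enter as convex combinations in $h$, so $\Pi(h)$ is a convex function of $h$ on $[h_1,h_2]$; hence \eqref{e4.3} at $h\in\{h_1,h_2\}$ suffices to guarantee $\Pi(h)<0$ for every intermediate $h$. Since $\Pi(h)<0$ implies $\Delta V(k)\le -\varepsilon\|x(k)\|^2$ for some $\varepsilon>0$, the standard discrete Krasovskii theorem yields asymptotic stability of \eqref{e4.1}.

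The main obstacle, in my view, is the bookkeeping showing that the resulting bound really assembles into $\Pi(h)$ and is convex in $h$: the $h$-dependent scalars $T(h-h_1),T(h_2-h),\gamma(h-h_1),\gamma(h_2-h)$ must cancel cleanly against the reciprocally convex combination and the weights of Corollary~\ref{lm3.3}, which is precisely what forces the specific forms of $\Omega(h)$, of the $\Gamma_i$'s, and of the diagonal scalings $c_i(h_1)$ chosen by the authors; verifying this cancellation is where nearly all of the symbolic work lies.
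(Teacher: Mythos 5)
Your proposal follows essentially the same route as the paper: the same augmented LKF (quadratic term in $\mathrm{col}\{x(k),\sum x,\sum x,\sum\sum x\}$ plus $Q$-, $R$- and $S$-weighted single, double and triple summations), the same bounding of the residual summations via the refined Jensen inequalities and the reciprocally convex lemma to assemble $\Delta V\le\zeta_0^T\Pi(h)\zeta_0$, and the same endpoint argument (the paper notes $\Pi(h)$ is in fact \emph{affine} in $h$, since after the reciprocally convex step only $\Pi_0(h)$ retains $h$-dependence, through $\Omega(h)$). The only slip is a bookkeeping one: the paper's identities are $\tilde{x}(k)=(\Omega(h)-\Omega_2)\zeta_0(k)$ and $\tilde{x}(k+1)=(\Omega(h)-\Omega_1)\zeta_0(k)$ rather than $\tilde{x}(k)=\Omega(h)\zeta_0(k)$, which is exactly the kind of detail you flagged as remaining to be verified.
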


\begin{proof} Consider the following LKF
\begin{align*}
V(x^{[k]})=&\tilde{x}^T(k)P\tilde{x}(k)+\sum_{s=k-h_1}^{k-1}x^T(s)Q_1x(s)\\
&+\sum_{s=k-h_2}^{k-h_1-1}x^T(s)Q_2x(s)+h_1\sum_{s=-h_1}^{-1}\sum_{i=k+s}^{k-1}\Delta{x}^T(i)R_1\Delta{x}(i)\\
&+h_{12}\sum_{s=-h_2}^{-h_1-1}\sum_{i=k+s}^{k-1}\Delta{x}^T(i)R_2\Delta{x}(i)\\
&+\sum_{s=-h_1}^{-1}\sum_{i=-h_1}^s\sum_{j=k+i}^{k-1}\Delta{x}^T(j)S_1\Delta{x}(j)\\
&+\sum_{s=-h_2}^{-h_1-1}\sum_{i=-h_2}^s\sum_{j=k+i}^{k-1}\Delta{x}^T(j)S_2\Delta{x}(j),
\end{align*}
where $\tilde{x}(k)=\mathrm{col}\big\{x(k),\sum\limits_{s=k-h_1}^{k-1}x(s),
\sum\limits_{s=k-h_2}^{k-h_1-1}x(s),\sum\limits_{s=-h_1}^{-1}\sum\limits_{i=k+s}^{k-1}x(i)\big\}$
and $x^{[k]}$ denotes the segment $\{x(k): k\in\Bbb{Z}[-h_2,0]\}$.

The previous functional is positive definite due to
the assumptions of Theorem \ref{thm4.1}.
Now, we employ our newly derived inequalities in Lemma \ref{lm3.1} and
Lemma \ref{lm3.2} in bounding $\Delta V(x^{[k]})$. 
Note at first that $\tilde{x}(k+1)=(\Omega(h)-\Omega_1)\zeta_0(k)$,
$\tilde{x}(k)=(\Omega(h)-\Omega_2)\zeta_0(k)$ and 
$\Delta(\tilde{x}^T(k)P\tilde{x}(k))=(\tilde{x}(k)+\tilde{x}(k+1))^TP\Delta\tilde{x}(k)$.
Then we have
\begin{equation}\label{e4.4}
\begin{aligned}
&\Delta V(x^{[k]})=\zeta_0^T(k)\left(\Pi_0(h)+\Pi_1+\Pi_2\right)\zeta_0(k)\\
&-h_1\sum_{s=k-h_1}^{k-1}\Delta^T{x}(s)R_1\Delta{x}(s)
-h_{12}\sum_{s=k-h_2}^{k-h_1-1}\Delta^T{x}(s)R_2\Delta{x}(s)\\
&-\sum_{s=-h_1}^{-1}\sum_{i=k-h_1}^{k+s}\Delta^T{x}(i)S_1\Delta{x}(i)
-\sum_{s=-h_2}^{-h_1-1}\sum_{i=k-h_2}^{k+s}\Delta^T{x}(i)S_2\Delta{x}(i).
\end{aligned}
\end{equation}

Note that, the following equality
\begin{equation}\label{e4.5}
\sum_{s=a}^{b}\sum_{i=a}^sv(i)=(b-a+2)\sum_{s=a}^bv(s)-\sum_{s=a}^b\sum_{i=s}^bv(i)
\end{equation}
holds for any sequence $v:\Bbb{Z}[a,b]\to\Bbb{R}^n$.
Using \eqref{e4.5} in presenting $\upsilon_3$ defined in Lemma \ref{lm3.1},
we have
\begin{equation}\label{e4.6}
-h_1\sum_{s=k-h_1}^{k-1}\Delta^T{x}(s)R_1\Delta{x}(s)
\leq -\zeta_0^T(k)\Gamma_1^T\tilde{R}_1(h_1)\Gamma_1\zeta_0(k).
\end{equation}

Similarly, the second summation term of \eqref{e4.4} can be bounded
by \eqref{e3.16} and Lemma \ref{lm4.1} as follows
\begin{align*}
-h_{12}\sum_{s=k-h_2}^{k-h_1-1}&\Delta^T{x}(s)R_2\Delta{x}(s)
\leq -\frac{h_{12}}{h-h_1}\zeta_0^T(k)\Gamma_2^T\tilde{R}_2\Gamma_2\zeta_0(k)\\
&-\frac{h_{12}}{h_2-h}\zeta_0^T(k)\Gamma_3\tilde{R}_2)\Gamma_3\zeta_0(k)\\
&=-\zeta_0^T(k)\begin{bmatrix}\Gamma_2\\\Gamma_3\end{bmatrix}^T
\begin{bmatrix}\frac{h_{12}}{h-h_1}\tilde{R}_2 &0\\ 0&\frac{h_{12}}{h_2-h}\tilde{R}_2\end{bmatrix}
\begin{bmatrix}\Gamma_2\\\Gamma_3\end{bmatrix}\zeta_0(k)\\
&\leq -\zeta_0^T(k)\begin{bmatrix}\Gamma_2\\\Gamma_3\end{bmatrix}^T
\begin{bmatrix}\tilde{R}_2 &X\\ *&\tilde{R}_2\end{bmatrix}
\begin{bmatrix}\Gamma_2\\\Gamma_3\end{bmatrix}\zeta_0(k).
\end{align*}
Note that, when $h=h_1$ and $h=h_2$ then $\Gamma_2\zeta_0(k)=0$ and
$\Gamma_3\zeta_0(k)=0$, respectively, and thus
the last inequality is still valid. Therefore
\begin{equation}\label{e4.7}
-h_{12}\sum_{s=k-h_2}^{k-h_1-1}\Delta^T{x}(s)R_2\Delta{x}(s)\leq -\zeta_0^T(k)\Pi_4\zeta_0(k).
\end{equation}

By Lemma \ref{lm3.2} we have
\begin{equation}\label{e4.8}
-\sum_{s=-h_1}^{-1}\sum_{i=k-h_1}^{k+s}\Delta{x}^T(i)S_1\Delta{x}(i)
\leq -\zeta_0^T(k)\Pi_5\zeta_0(k).
\end{equation}
Now, we employ \eqref{e3.17} to bound the last term in \eqref{e4.4}.
To do this, note at first that
\begin{align*}
\sum_{s=-h_2}^{-h_1-1}\sum_{i=k-h_2}^{k+s}\Delta{x}^T(i)S_2\Delta{x}(i)
&\geq \sum_{s=-h}^{-h_1-1}\sum_{i=k-h}^{k+s}\Delta{x}^T(i)S_2\Delta{x}(i)\\
&+\sum_{s=-h_2}^{-h-1}\sum_{i=k-h_2}^{k+s}\Delta{x}^T(i)S_2\Delta{x}(i).
\end{align*}
Then, by applying \eqref{e3.17} and rearranging the obtained results we get
\begin{equation}\label{e4.9}
-\sum_{s=-h_2}^{-h_1-1}\sum_{i=k-h_2}^{k+s}\Delta{x}^T(i)S_2\Delta{x}(i)
\leq -\zeta_0^T(k)\Pi_6\zeta_0(k).
\end{equation}

It follows from \eqref{e4.4}-\eqref{e4.9} that
\begin{equation}\label{e4.10}
\Delta V(x^{[k]})\leq \zeta_0^T(k)\Pi(h)\zeta_0(k).
\end{equation}
The matrix $\Pi(h)$ is an affine function $h$, and thus,
$\Pi(h)<0$ for all $h\in[h_1,h_2]$ if and only if $\Pi(h_1)<0$ and $\Pi(h_2)<0$. 
Therefore, if \eqref{e4.3} holds for $h=h_1$ and $h=h_2$ then, from \eqref{e4.10},
$\Delta V(x^{[k]})$ is negative definite which ensures the asymptotic stability of system \eqref{e4.1}.
The proof is completed.
\end{proof}

\begin{remark}\label{rm4.1}
In Theorem \ref{thm4.1}, a full $3n\times 3n$ matrix $X$ is used in the reciprocally inequality
to improve the upper bound of delay. However, to reduce the number of decision variables,
we can use the matrix $X$ of the form $X=\mathrm{diag}\{X_1,X_2,X_3\}$, where $X_i\in\Bbb{R}^{n\times n}$,
$i=1,2,3$.
\end{remark}

\subsection{Examples}\label{sec:5}

\begin{example}
Consider system \eqref{e4.1} with the matrices taken from the literature
\[A=\begin{bmatrix} 0.8 & 0.0\\ 0.05 & 0.9\end{bmatrix},\quad
A_1=\begin{bmatrix} -0.1 & 0.0\\ -0.2 & -0.1\end{bmatrix}.
\]

The obtained results and comparison to most recent results in the literature
are given in Table I and Table II. It is worth noting that, thanks to our new
summation inequalities proposed in Lemma \ref{lm3.1} and Lemma \ref{lm3.2},
Theorem \ref{thm4.1} clearly delivers significantly better results than the
existing methods in the literature. Especially, our method requires less decision variables
than the proposed conditions in \cite{NPH,Kwon1,Kim}
while leading to much better results.
\end{example}

  \begin{table*}[!htb]\label{tb1}
  \caption{Upper bounds of $h_2$ for various $h_1$ in Example 1}
  \begin{tabular*}{\textwidth}{l @{\extracolsep{\fill}} lllllllll}\hline
  $h_1$& 2 & 4 & 6 & 10 & 15&20&25&30& NoDv\\ \hline
\cite{SH} (Proposition 1) &17&17&18&20&23&27&31&35& $8n^2+3n$\\
\cite{Peng} (Theorem 1) &18&18&19&20&23&26&30&35& $3.5n^2+3.5n$\\
\cite{ZPZ} (Theorem 3.1, $l=4$) &20&21&21&22&24&27&29&34& $9.5n^2+5.5n$\\
\cite{FLY} (Theorem 4, $l=4$) &21&21&21&22&24&27&31&35&$9.5n^2+5.5n$\\
\cite{Kwon1} (Theorem 2)&22&22&22&23&25&28&32&36&$27n^2+9n$\\
\cite{Kim} (Theorem 2) &22&22&22&23&25&28&32&36&$23n^2+7n$\\
   \cite{NPH} (Theorem 1) &22&22&22&23&26&29&32&36&$19n^2+5n$\\
Remark \ref{rm4.1} &24&26&27&30&32&33&35&39&$14n^2+5n$\\
   Theorem \ref{thm4.1} &26&27&28&31&34&35&36&39 &$20n^2+5n$\\
   \hline
   \end{tabular*}
   {NoDv: Number of Decision variable}
   \end{table*}

  \begin{table}[!htb]\label{tb2}
  \caption{Upper bounds of $h_2$ for various $h_1$ in Example 1}
  \begin{tabular*}{\textwidth}{l @{\extracolsep{\fill}} lllllll}\hline
  $h_1$& 1 & 3 & 5 & 7 & 11&13& NoDv\\ \hline
\cite{ZXZ} &12&13&14&15&17&19& $9n^2+3n$\\
\cite{HWLX}&17&17&17&18&20&22& $13n^2+5n$\\
\cite{Kao} &17&18&19&21&25&25&$\mathrm{Dv}_*$\\ 
 \cite{SGF1} &20&21&21&22&23&24&$10.5n^2+3.5n$\\
Thm. \ref{thm4.1} &26&27&28&29&32&33&$14n^2+5n$\\     \hline
   \end{tabular*}
$\mathrm{Dv}_*=(h_2+1)^2n^2/2+(h_2+2)n/2$
\end{table}

\begin{example}
Let us now consider a practical satellite control system \cite{GMCL}.
The dynamic equations are as follows
\begin{equation}\label{e4.11}
\begin{aligned}
&J_1\ddot\theta_1(t)+f(\dot\theta_1(t)-\dot\theta_2(t))
+k(\theta_1(t)-\theta_2(t))=u(t),\\
&J_2\ddot\theta_2(t)+f(\dot\theta_1(t)-\dot\theta_2(t))
+k(\theta_1(t)-\theta_2(t))=0,
\end{aligned}
\end{equation}
where $J_i$, $i=1,2$, are the moments of inertia of the two bodies,
$f$ is a viscous damping, $k$ is a torque constant, $\theta_i(t)$ are the
yaw angles for the two bodies and $u(t)$ is a control input.
The following parameters are borrowed from \cite{GMCL}:
$J_1=J_2=1$, $k=0.09, f=0.04$. Let $x_i(t)=\theta_i(t)$,
$x_{i+2}(t)=\dot\theta_i(t)$, $i=1,2$. By choosing a sampling time $T=10$ ms,
system \eqref{e4.11} can be transformed to the following discrete-time system
\cite{Kwon1}
\begin{equation}\label{e4.12}
x(k+1)=Ax(k)+Bu(k)
\end{equation}
where
\[
A=\begin{bmatrix}1&0&0.01&0\\0&1&0&0.01\\
-0.009&0.009&0.9996&0.0004\\
0.009&-0.009&0.0004&0.9996\end{bmatrix},
B=\begin{bmatrix}0\\0\\0.01\\ 0\end{bmatrix}.
\]

A delayed state feedback controller is designed in the form $u(k)=Kx(k-h(k))$,
where $h(k)$ is time-varying delay belonging to the interval $[h_1,h_2]$.
For $h_1=1$, it was found that with the controller gain
$K=\begin{bmatrix}0.1284&-0.1380&-0.3049 &0.0522\end{bmatrix}$,
Theorems 1 and 2 in \cite{Kwon1} give the upper bounds of $h_2$ as 
129 and 135, respectively, which are larger than 98 delivered by the results of \cite{ZXZ}.
We apply Theorem \ref{thm4.1} for $A_d=BK$, it is found that the closed-loop system
remains asymptotically stable for the time-varying delay $h(k)\in[1,170]$ which
shows a clear reduction of the conservatism. To demonstrate the effectiveness of the obtained
result, a simulation with $h(k)=1+169|\sin(k\pi/2)|$ and initial condition
$[2\; -1\;0.2\; -0.5]^T$ is presented in Fig. 1. It can be seen that the state trajectory converges
to zero as shown by our theoretical result.

\begin{figure}[!ht]
\centering
\includegraphics[width=0.68\textwidth]{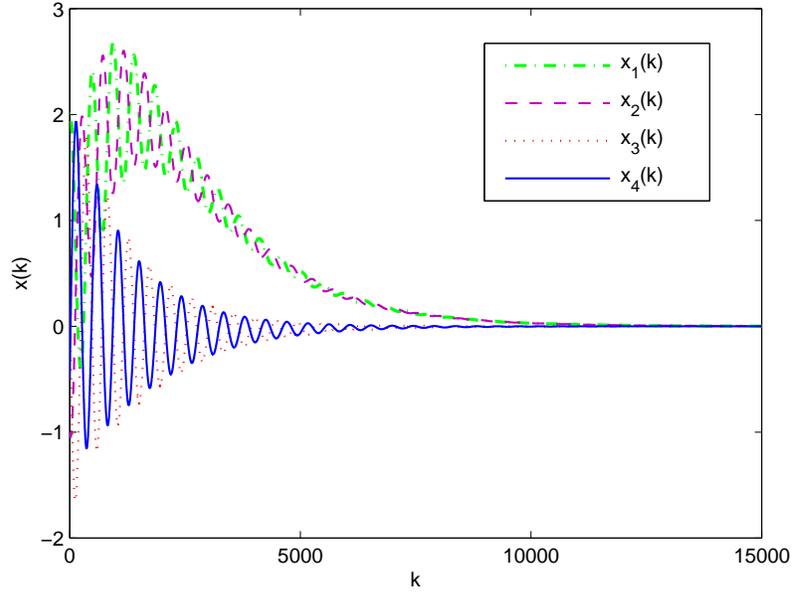}
\caption{Responses of the satellite system with $h(k)=1+169|\sin(k\pi/2)|$}\label{fig:1}
\end{figure}
\end{example}

\section{Conclusion}
In this paper, new summation inequalities in single and double form have been proposed.
By employing the newly derived inequalities, improved stability conditions
have been derived for a class of discrete-time systems with
time-varying delay. Provided examples and comparisons to
the most recent results found in the literature 
show the potential and a large improvement
on the stability conditions deliver by the approach proposed
in this paper.

% \section*{Acknowledgments} 
% This work was partially supported by the ARC Discovery (grant DP130101532)
% and the NAFOSTED of Vietnam (grant 101.01-2014.35).

\end{document}